\newcommand{\I}{\mathrm{I}}
\newtheorem{theorem}{Theorem}[section]
\newtheorem{proposition}[theorem]{Proposition}
\theoremstyle{definition}
\newtheorem{example}[theorem]{Example}
\newtheorem{remark}[theorem]{Remark}
\theoremstyle{remark}
\newtheorem*{acknowledgement}{Acknowledgement}
\def\tt{\ensuremath{\mathfrak{t}}}
\def\KK{\mathbb{K}}
\def\ZZ{\mathbb{Z}}
\def\PP{\mathbb{P}}
\def\QQ{\mathbb{Q}}
\def\CC{\mathcal C}
\def\<{\langle}
\def\>{\rangle}
\def\cox{\mathcal{R}}
\renewcommand{\phi}{\varphi}
\def\rq#1{\widehat{#1}}
\def\b#1{\overline{#1}}
\def\bangle#1{\langle #1 \rangle}
\def\CC{{\mathbb C}}
\def\KK{{\mathbb K}}
\def\ZZ{{\mathbb Z}}
\def\QQ{{\mathbb Q}}
\def\PP{{\mathbb P}}
\def\Cox{\cox}
\def\Cl{\operatorname{Cl}}
\def\Pic{\operatorname{Pic}}
\def\KT#1{\KK[T_1,\ldots,T_{#1}]}
\def\tt#1{\texttt{#1}}
\def\out#1{\begingroup\tiny\begin{gather*} #1 \end{gather*}\endgroup}
\begin{document}

\title[MDSpackage]{A software package for Mori dream spaces} 
\author[J.~Hausen and S.~Keicher]{J\"urgen~Hausen and Simon~Keicher}
\subjclass[2000]{
14Q10, 14Q15\\
The second author was partially supported 
by the DFG Priority Program SPP 1489.
}

 \address{Mathematisches Institut, Universit\"at T\"ubingen,
Auf der Morgenstelle 10, 72076 T\"ubingen, Germany}
\email{juergen.hausen@uni-tuebingen.de}

\address{Mathematisches Institut, Universit\"at T\"ubingen,
Auf der Morgenstelle 10, 72076 T\"ubingen, Germany}
\email{keicher@mail.mathematik.uni-tuebingen.de}

\begin{abstract}
Mori dream spaces form a large example class of 
algebraic varieties, comprising the well known
toric varieties.
We provide a first software package for the explicit 
treatment of Mori dream spaces and demonstrate its 
use by presenting basic sample computations.
The software package is accompanied by a Cox ring 
database which delivers defining data for Cox rings 
and Mori dream spaces in a suitable format.
As an application of the package, we determine 
the common Cox ring for the symplectic resolutions 
of a certain quotient singularity investigated
by Bellamy/Schedler and Donten-Bury/Wi\'sniewski.
\end{abstract}

\maketitle

%%%%%%%%%%%%%%%%%%%%%%%%%%%%%

\section{Introduction}

By a \emph{Mori dream space} we mean here a normal 
complete variety $X$ defined over an algebraically 
closed field $\KK$ of characteristic zero having 
a finitely generated divisor class group $\Cl(X)$ 
and a finitely generated \emph{Cox ring}
$$ 
\mathcal{R}(X)
\ = \ 
\bigoplus_{\Cl(X)} \Gamma(X,\mathcal{O}(D)),
$$  
where we refer to~\cite{ArDeHaLa} for the details 
of the definition.
Mori dream spaces have been introduced by Hu and 
Keel~\cite{HuKe} as a class of varieties with an 
optimal behaviour with respect to the Minimal 
Model Program.
Well known examples of Mori dream spaces are 
toric and, more generally, spherical varieties,
smooth Fano varieties~\cite{BCHM} or Calabi-Yau
varieties with a polyhedral effective 
cone~\cite{McK}.
Examples of general type can be obtained by 
Lefschetz-type theorems~\cite{ArLa,Jow}.

An important feature of Mori dream spaces is
that they allow an explicit encoding in terms 
of algebraic and combinatorial 
data~\cite{ArDeHaLa, BeHa,Ha2} and their theory
has close relations to toric geometry.
This turns Mori dream spaces into natural candidates 
for extending the ``testing ground of algebraic 
geometry'' given by the toric varieties.
For working explicitly with toric varieties, 
there are meanwhile several software packages 
available such as~\cite{tordiv14, magma, Gu, Joytoric, KLPW, VeJo}.
Our intention is to provide with 
\texttt{MDSpackage}~\cite{mdspackage} such a tool 
also for the larger class of Mori dream spaces.

Let us give a more concrete impression.
Every Mori dream space $X$ is encoded by its Cox ring $\Cox(X)$ 
plus data located in the divisor class group $\Cl(X)$ 
which fix the isomorphy type among all varieties sharing 
$\Cox(X)$ as Cox ring.
For instance, we can define a three-dimensional projective 
Mori dream space $X$ by prescribing its divisor class group 
as $K := \ZZ^2$, its $K$-graded Cox ring as 
\begin{center}
\vspace*{-2ex}
\begin{minipage}{7cm}
\begin{gather*}
R
 \, :=\, 
\KT{6}/\<T_1T_2 + T_3T_4 + T_5^2 + T_6^2\>,
\\
  Q\, :=\, 
 \left[
\mbox{\tiny $
\begin{array}{rrrrrr}
-2 & 2 & -1 & 1 & 0 & 0\\
 1 & 1 &  1 & 1 & 1 & 1
\end{array}
$}
\right],
\end{gather*}
\end{minipage}
\qquad\quad
\begin{minipage}{3cm}
\begin{tikzpicture}[scale=.73, transform shape]
\fill[color=black!20] (-2,1) -- ( 2,1) -- (0,0) -- cycle;
\fill[color=blue!40!white] (-1,1) -- (1,1) -- (0,0) -- cycle;

\draw[thick] (0,0) -- (0,1) node[anchor=south]{$q_6=q_5$};
\draw[thick] (0,0) -- (-1,1) node[anchor=south]{$q_3$};
\draw[thick] (0,0) -- (-2,1) node[anchor=south]{$q_1$};
\draw[thick] (0,0) -- (1,1) node[anchor=south]{$q_4$};
\draw[thick] (0,0) -- (2,1) node[anchor=south]{$q_2$};

\foreach \a/\b in {0/1,-1/1,-2/1, 1/1, 2/1} {
  \fill (\a,\b) circle(.2em);
}

\draw[color=blue!80!white,decorate,decoration={brace,amplitude=7pt},rotate=0] (-1,1.5) -- (1,1.5);
\draw[color=blue!80!white] (0,1.75) node[anchor=south]{${\rm Mov}(X)$};
\end{tikzpicture}
\end{minipage}
\end{center}
where $Q$ has the generator degrees $\deg(T_i) \in K$ 
as its columns,
and an ample class $w \in K$ taken from the relative 
interior of the above blue cone, i.e.~the prospective moving 
cone of~$X$.
The $K$-grading of $R$ defined by $Q$ gives rise to 
an action of the 2-torus $H = (\KK^*)^2$ on 
$$
\b{X} 
\ = \ 
V(T_1T_2 + T_3T_4 + T_5^2 + T_6^2)
\ \subseteq \ 
\KK^6
$$ 
and the Mori dream space $X$ is the quotient by $H$ of the set
$\rq{X} \subseteq \b{X}$ of semistable points 
associated to the weight $w$.
As mentioned, divisor class group and Cox ring of $X$ 
are given by $\Cl(X) = K$ and $\Cox(X) = R$.
The task of \texttt{MDSpackage} is then to extract further
geometric invariants and properties from these defining data.
For example, it determines the Picard group 
and the singularities of $X$ as
\begin{gather*}
\Pic(X)
\ =\ 
6\ZZ \oplus 3\ZZ \ \subseteq \ \ZZ^2
\ =\ 
\Cl(X),
\\
{\rm Sing}(X) 
\ =\  
\{
\{1, 5, 6\}, \{1, 2, 5, 6\}, \{1, 2, 6\}, \{2, 3\}, \{1, 4\}, \{1, 2, 5\}
\},
\end{gather*}
where the output on the singularities provides information
on their Cox coordinates; for example 
$\{1, 5, 6\}$ says that there is a singular point $x \in X$ 
stemming from a point $z \in \rq{X} \subseteq \KK^6$ having
precisely $z_1,z_5,z_6$ as non-zero coordinates.

\texttt{MDSpackage} aims to be an easy-to-use 
computing environment for up to medium size computations.
In the subsequent section,
we present sample computations and thereby explain
the syntax.
The major computational ingredients come from commutative 
algebra and polyhedral combinatorics. 
The basic features of \texttt{MDSpackage} are
\begin{itemize}
\item basics on finitely
generated abelian groups
and algebras graded by them,
\item 
computing Picard group, local class groups,
cones of effective,
movable or semiample divisor classes,
Mori chamber decomposition, 
pullback of $\QQ$-Cartier divisors, 
\item 
computing the
canonical toric ambient variety, 
induced orbit stratification,
irrelevant ideal,
\item
testing (quasi-)smoothness,
($\QQ$-)factoriality, 
completeness, (quasi-)projecti\-vity,
\item
computing the singular locus, global resolution of singularities 
(approved for varieties with torus action of complexity 
one, experimental in the general case),
\item
for complete intersection Cox rings:
computing intersection numbers,
graph of exceptional divisors,
anticanonical divisor class,
Gorenstein index,
testing ($\QQ$-)Gorenstein and Fano properties,
\item 
for varieties with a torus action of complexity one:
tests for being ($\varepsilon$-log) terminal,
almost homo\-gen\-eous,
computing roots of the automorphism group.
\end{itemize}
Detailed background on the algorithms and a complete 
manual for~\texttt{MDSpackage} can be found in~\cite{Ke}.
Moreover, a comprehensive online manual is available 
at~\cite{mdspackage}.
Our package is implemented in the computer algebra system 
\texttt{Maple}~\cite{maple} and makes use of the \texttt{convex}
package by Matthias Franz~\cite{convex}.

\texttt{MDSpackage} has been essentially used in the 
classification of Fano varieties. In~\cite{BeHaHuNu}
the $\QQ$-factorial terminal Fano threefolds of Picard
number one with an effective action of a two-dimensional
torus are classified; among other computations performed 
with \texttt{MDSpackage}, this involves more than $10^{6}$
terminality tests for possible candidates. 
Other applications are the classification results on 
$\KK^{*}$-surfaces of high Gorenstein index given 
in~\cite{Ke}.

In order to open a broad computer-supported access to
Mori dream spaces, we are building up a database 
of Cox rings~\cite{coxringdb}, the entries of which can 
be exported in a suitable data format for \texttt{MDSpackage}.
Our motivation for creating such a combined toolkit is the fruitful
linking of the theory of weighted complete intersections 
with the \emph{graded ring database}, 
see~\cite{gradedringdb, IaFl}.

In Section~\ref{sec:app} we apply~\texttt{MDSpackage} to 
continue work of Bellamy/Schedler~\cite{BellamySchedler} 
and Donten-Bury/Wi\'{s}niewski~\cite{JarekMaria} on 
the 81 symplectic resolutions of a certain quotient 
singularity: in Theorem~\ref{thm:81resol} we determine 
the common Cox ring of these resolutions.

\section{Working with~\texttt{MDSpackage}}
\label{sec:working}

According to~\cite{BeHa,Ha2}, a Mori dream space $X$ is 
encoded by a \emph{bunched ring}.
This basically is an integral algebra $R = \oplus_K R_{w}$ 
graded by a finitely generated abelian group $K$ such that 
the $K$-homogeneous elements admit unique factorization
together with a collection $\Phi$, called ``bunch'', of 
convex polyhedral cones in $K \otimes \QQ$. 
Concretely, the $K$-graded algebra $R$ is given by 
generators and relations and a degree map assigning to 
each generator its $K$-degree:
$$
R
\ = \ 
\KK[T_1,\ldots,T_r]
\, / \, 
\bangle{g_1,\ldots, g_s},
\qquad\qquad
Q \colon \ZZ^r \ \to \ K,
\qquad
e_i \ \mapsto \ \deg(T_i).
$$
The bunch of cones $\Phi$ fixes the isomorphy type of 
$X$ among all varieties having $R$ as Cox ring. 
In case of a projective Mori dream space $X$, one can 
simply define $\Phi$ by fixing an ample class $w \in K$ 
from inside the moving cone.

We now demonstrate the practical work 
with~\texttt{MDSpackage}~\cite{mdspackage}
by means of three example computations, 
more can be found in~\cite{Ke}.

\begin{example}
The first step in defining a Mori dream space~$X$ 
with \texttt{MDSpackage} is to enter the abelian groups 
$E := \ZZ^r$, $K$ and the degree map $Q \colon E \to K$:\\

\begingroup
\footnotesize
\begin{enumerate}[leftmargin=3em]
\item[\tt >] \tt{E := createAG(8);}
\out{E := AG(8, [])}
\item[\tt >] \tt{K := createAG(3, [2]);}
\out{K := AG(3, [2])}
\item[\tt >] \tt{A := cols2matrix([[1,0,1,1], [1,1,1,0], [0,1,1,1], [0,-1,1,0],
 [-1,-1,1,1], \break [-1,0,1,0], [2,1,1,1], [-2,-1,1,0]]);}
 \out{A := \left[\begin{array}{rrrrrrrr}
           1 & 1 & 0 & 0 & -1 & -1 & 2 & -2\\
	   0 & 1 & 1 & -1 & -1 & 0 & 1 & -1\\
	   1 & 1 & 1 & 1 & 1 & 1 & 1 & 1\\
	   1 & 0 & 1 & 0 & 1 & 0 & 1 & 0
           \end{array}\right]}
\item[\tt >] \tt{Q := createAGH(E, K, A);}
\out{Q := AGH([8, []], [3, [2]])}
\end{enumerate}
\endgroup

\noindent
Here we took $E=\ZZ^8$ and $K=\ZZ^3\oplus \ZZ/2\ZZ$
and the $4\times 8$ matrix $A$ fixes a map  $\ZZ^8\to \ZZ^4$
inducing the degree map $Q \colon E \to K$.
The next step is to define the $K$-graded ring $R$.
We have to specify variables, relations and the grading:\\

\begingroup
\footnotesize
\begin{enumerate}[leftmargin=3em]
\item[\tt >] \tt{TT := vars(8);}
\out{TT := \left[
T[1], T[2], T[3], T[4], T[5], T[6], T[7], T[8]
\right]}
\item[\tt >] \tt{RL := [T[1]*T[6] + T[2]*T[5] + T[3]*T[4] + T[7]*T[8]];}
\out{RL := \left[T[1]T[6] + T[2]T[5] + T[3]T[4] + T[7]T[8]\right]}
\item[\tt >] \tt{R := createGR(RL, TT, [Q], 'nocheck');}
\out{R := GR(8, 1, [3, [2]])}
\end{enumerate}
\endgroup

\noindent
The last output line indicates that $R$ is given by eight 
generators and one relation and its grading group is 
isomorphic to $\ZZ^3\oplus \ZZ/2\ZZ$; the option \tt{'nocheck'}
speeds up the computation by omitting plausibility checks.
To define $X$, 
it remains to fix a prospective 
ample class $w \in K\otimes \QQ$:\\

\begingroup
\footnotesize
\begin{enumerate}[leftmargin=3em]
\item[\tt >] \tt{w := [0,0,2];}
\out{w := [0,0,2]}
\item[\tt >] \tt{X := createMDS(R, w);}
\out{X := MDS(8, 1, 4, [3, [2]])}
\end{enumerate}
\endgroup

\noindent
The last output line is similar to that for $R$,
the only new thing is the third entry, saying 
that the resulting Mori dream space $X$ is of dimension four. 
We are now ready for computations with~$X$.
First, we determine the Picard group of $X$ 
as a subgroup of $\Cl(X)=K$
and the factor group $\Cl(X) / \Pic(X)$:\\

\begingroup
\footnotesize
\begin{enumerate}[leftmargin=3em]
\item[\tt >] \tt{Pic := MDSpic(X);}
\out{Pic := AG(3, [])}
\item[\tt >] \tt{AGfactgrp(K, Pic);}
\out{AG(0, [2, 12, 12, 24])}
\end{enumerate}
\endgroup

\noindent
The first output tells us tat $\Pic(X)$ is of rank three and 
torsion free, i.e., we have $\Pic(X) \cong \ZZ^3$.
The second output says
\[
\Cl(X) / \Pic(X)
\ \cong \ 
\ZZ/2\ZZ
\oplus
\ZZ/12\ZZ
\oplus
\ZZ/12\ZZ
\oplus
\ZZ/24\ZZ.
\]
The cones of semiample, movable and effective
divisor classes in the rational divisor class group 
$K \otimes_\ZZ \QQ$ are computed as follows:\\

\begingroup
\footnotesize
\begin{enumerate}[leftmargin=3em]
\item[\tt >] \tt{MDSsample(X); }
\out{CONE(3, 3, 0, 8, 8)}
\item[\tt >] \tt{MDSmov(X); }
\out{CONE(3, 3, 0, 4, 4)}
\item[\tt >] \tt{MDSeff(X);}
\out{CONE(3, 3, 0, 4, 4)}
\end{enumerate}
\endgroup

\noindent
These cones are stored in \texttt{convex} format.
In particular, for the first one, the output tells 
us that we have a $3$-dimensional cone in $3$-space 
having $0$-dimensional lineality part and $8$ rays and $8$ 
facets; 
further information on the cones can be extracted 
via suitable \texttt{convex} commands.
We compute the Mori chamber decomposition of the 
effective cone:\\

\begingroup
\footnotesize
\begin{enumerate}[leftmargin=3em]
\item[\tt >] \tt{F := MDSchambers(X);}
\out{F := FAN(3, 0, [0, 0, 37])}
\end{enumerate}
\endgroup

\noindent
Again the fan is stored in \texttt{convex} format.
We refer to~\cite{HuKe} for theoretical background 
of the Mori chamber decomposition and to~\cite{Ke0} 
for the algorithmic aspects.
We can also ask \texttt{MDSpackage} for 
a \texttt{povray}-visualization:\\

\begingroup
\footnotesize
\begin{enumerate}[leftmargin=3em]
\item[\tt >] \tt{render(F);}
\end{enumerate}
\endgroup

\begin{center}
\includegraphics[width=5cm]{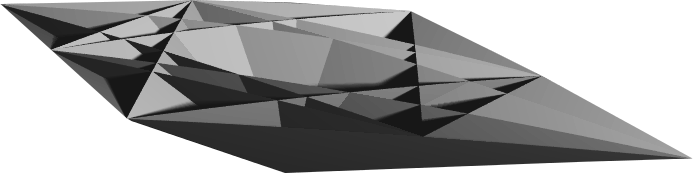}
\end{center}

\noindent
For complete intersection Cox rings like the current example, 
we may also check if~$X$ is a Fano variety and compute its 
Gorenstein index:\\

\begingroup
\footnotesize
\begin{enumerate}[leftmargin=3em]
\item[\tt >] \tt{MDSisfano(X);}
\out{true}
\item[\tt >] \tt{MDSgorensteinind(X);}
\out{4}
\end{enumerate}
\endgroup
\end{example}

\begin{example}
We consider the Gorenstein del Pezzo surface $X$ 
having Picard number one, singularity type $E_6A_2$
and finite automorphism group. 
The Cox ring is given by 
\[
\Cox(X) 
\,=\, \KT{4}/\<-T_{1}T_{4}^{2}+T_{2}^{3}+T_{2}T_{3}T_{4}+T_{3}^{3}\>
,\quad
Q \,=\, 
\left[
    \mbox{\tiny $
    \begin{array}{rrrr}
    1 & 1 & 1 & 1 \\
    \b 1 & \b 2 & \b 0 & \b 1
    \end{array}
    $}
    \right],
\]
where the divisor class group $\Cl(X)$ is isomorphic 
to $K = \ZZ \oplus \ZZ/3\ZZ$, see~\cite[Thm.~4.1]{HaKeLa}. 
First, we have to enter $X$:\\

\begingroup
\footnotesize
\begin{enumerate}[leftmargin=3em]
\item[\tt >] \tt{TT := vars(4);}
\out{TT := [T[1],T[2],T[3],T[4]]}
\item[\tt >] \tt{RL := [-T[1]*T[4]\textasciicircum 2 + T[2]\textasciicircum 3 + T[2]*T[3]*T[4] + T[3]\textasciicircum 3];}
\out{RL := [-T[1]T[4]^2 + T[2]^3 + T[2]T[3]T[4] + T[3]^3]}
\item[\tt >] \tt{B := cols2matrix([[1,1], [1,2], [1,0], [1,1]]);}
\out{B := \left[
\begin{array}{rrrr}
 1 & 1 & 1 & 1 \\
 1 &  2 &  0 &  1
\end{array}
\right]}
\item[\tt >] \tt{Q := createAGH(createAG(4), createAG(1, [3]), B);}
\out{Q := AGH([4, []], [1, [3]])}
\item[\tt >] \tt{R := createGR(RL, TT, [Q]);}
\out{R := GR(4, 1, [1, [3]])}
\item[\tt >] \tt{X := createMDS(R, [1]);}
\out{X := MDS(4, 1, 2, [1, [3]])}
\end{enumerate}
\endgroup

\noindent
We take a look at the singularities of $X$ and compute
a resolution.
Let us check smoothness properties and determine the singular locus:\\

\begingroup
\footnotesize
\begin{enumerate}[leftmargin=3em]
\item[\tt >] \tt{MDSissmooth(X);}
\out{false}
\item[\tt >] \tt{MDSisquasismooth(X);}
\out{false}
\item[\tt >] \tt{L := MDSsing(X);}
\out{L := \Bigl[\bigl[[-2T[1]T[4]+T[2]T[3], T[2]T[4]+3T[3]^2, 3T[2]^2+T[3]T[4], -T[4]^2,\\
-T[1]T[4]^2+T[2]^3+T[2]T[3]T[4]+T[3]^3],\ 
[T[1],T[2],T[3],T[4]]\bigr],\ 
[\{4\}, \{1\}]\Bigr]}
\end{enumerate}
\endgroup

\noindent
The first entry of \tt{L} is a list of generators for the ideal
of the singular locus of the total coordinate space $\b{X}$ 
and a list of used variables.
As mentioned, the second entry tells us that the singularities 
are the points $[0,0,0,1]$ and $[1,0,0,0]$, given in Cox coordinates.
The minimal resolution $X_2 \to X$ is computed by\\

\begingroup
\footnotesize
\begin{enumerate}[leftmargin=3em]
\item[\tt >] \tt{X2 := MDSresolvesing(X, 'verify', 'minimal');}\\
{\footnotesize Verification successful.}
\out{X2 := MDS(12, 1, 2, [9, []])}
\end{enumerate}
\endgroup

\noindent
Since the verification was successful, 
this means that $X_2$ is indeed a smooth Mori dream space.
In accordance with~\cite[p.~40, type $E_6A_2$]{Der},
we obtain for the Cox ring of $X_2$ a single defining equation
and the following degree matrix:\\

\begingroup
\footnotesize
\begin{enumerate}[leftmargin=3em]
\item[\tt >] \tt{R2 := MDSdata(X2)[1];}
\out{R2 := GR(12, 1, [9, []])}
\item[\tt >] \tt{GRdata(R2)[1];}
\out{
\bigl[
-T[1]T[4]^2T[5]+T[2]^3T[12]T[7]^2T[8]+T[2]T[3]T[11]T[4]T[5]T[6]T[7]T[8]T[10]\\
+T[3]^3T[9]T[11]^2T[10]
\bigr]}
\item[\tt >] \tt{GRdata(R2)[3];}
\out{
\left[
AGH([12, []], [9, []]),\ 
\left[
    \mbox{\tiny $
    \begin{array}{rrrrrrrrrrrr}
    1 & 1 & 1 & 1 & 0 & 0 & 0 & 0 & 0 & 0 & 0 & 0 \\
    1 & 0 & 0 & -1 & 1 & 0 & 0 & 0 & 0 & 0 & 0 & 0 \\
    1 & 0 & 0 & 0 & -1 & 1 & 0 & 0 & 0 & 0 & 0 & 0 \\
    1 & 0 & 0 & 0 & 0 & 0 & 0 & 1 & 1 & 0 & 0 & 0 \\
    0 & -1 & 0 & 0 & -1 & 0 & 1 & 0 & -1 & 0 & 0 & 0 \\
    0 & 0 & 0 & 0 & 0 & -1 & 0 & 0 & -1 & 1 & 0 & 0 \\
    1 & 0 & 0 & 0 & 0 & 0 & 1 & -1 & 0 & 1 & 0 & 0 \\
    0 & 1 & 0 & 1 & 0 & 0 & 0 & -1 & 0 & 0 & 1 & 0 \\
    0 & 0 & 0 & 0 & 1 & 0 & 0 & 0 & 1 & 0 & 0 & 1
    \end{array}
    $}
    \right]
\right]
}
\end{enumerate}
\endgroup

\noindent
We compute the graph of exceptional curves and
a list of the self-intersection number of the invariant divisors
$V(X; T_i)$; looking at the subgraph of $(-2)$-curves, we 
then see the $E_6$- and the $A_2$-singularities:\\

\begingroup
\footnotesize
\begin{enumerate}[leftmargin=3em]
\item[\tt >] \tt{MDSintersgraph(X2, 'latex');}
\begin{center}

\begin{tikzpicture}[scale=.75, transform shape]
    \tikzstyle{vertex}=[circle,fill=black!30,minimum size=15pt,inner sep=0pt]
    \tikzstyle{vertexblack}=[circle,fill=black,minimum size=15pt,inner sep=0pt]
    \tikzstyle{vertexwhite}=[circle,fill=white,minimum size=15pt,draw=black,inner sep=0pt]
    % Vertices:
    \pgfmathsetmacro{\angle}{(360 / 12) * 1};
    	\node[vertex,xshift=6cm,yshift=.5cm] (T1) at (\angle:1.5cm) {$T_{1}$};
    \pgfmathsetmacro{\angle}{(360 / 12) * 2};
    	\node[vertex,xshift=6cm,yshift=.5cm] (T6) at (\angle:1.5cm) {$T_{6}$};
    \pgfmathsetmacro{\angle}{(360 / 12) * 3};
    	\node[vertex,xshift=6cm,yshift=.5cm] (T9) at (\angle:1.5cm) {$T_{9}$};
    \pgfmathsetmacro{\angle}{(360 / 12) * 4};
    	\node[vertex,xshift=6cm,yshift=.5cm] (T12) at (\angle:1.5cm) {$T _{12}$};
    \pgfmathsetmacro{\angle}{(360 / 12) * 5};
    	\node[vertex,xshift=6cm,yshift=.5cm] (T2) at (\angle:1.5cm) {$T_{2}$};
    \pgfmathsetmacro{\angle}{(360 / 12) * 6};
    	\node[vertex,xshift=6cm,yshift=.5cm] (T3) at (\angle:1.5cm) {$T_{3}$};
    \pgfmathsetmacro{\angle}{(360 / 12) * 7};
    	\node[vertex,xshift=6cm,yshift=.5cm] (T4) at (\angle:1.5cm) {$T_{4}$};
    \pgfmathsetmacro{\angle}{(360 / 12) * 8};
    	\node[vertex,xshift=6cm,yshift=.5cm] (T5) at (\angle:1.5cm) {$T_{5}$};
    \pgfmathsetmacro{\angle}{(360 / 12) * 9};
    	\node[vertex,xshift=6cm,yshift=.5cm] (T7) at (\angle:1.5cm) {$T_{7}$};
    \pgfmathsetmacro{\angle}{(360 / 12) * 10};
    	\node[vertex,xshift=6cm,yshift=.5cm] (T8) at (\angle:1.5cm) {$T_{8}$};
    \pgfmathsetmacro{\angle}{(360 / 12) * 11};
    	\node[vertex,xshift=6cm,yshift=.5cm] (T10) at (\angle:1.5cm) {$T_{10}$};
    \pgfmathsetmacro{\angle}{(360 / 12) * 12};
    	\node[vertex,xshift=6cm,yshift=.5cm] (T11) at (\angle:1.5cm) {$T_{11}$};

    % Edges:
    \draw[line width=.8pt] (T1) -- (T4);
    \draw[line width=.8pt] (T1) -- (T9);
    \draw[line width=.8pt] (T1) -- (T12);
    \draw[line width=.8pt] (T2) -- (T7);
    \draw[line width=.8pt] (T2) -- (T12);
    \draw[line width=.8pt] (T3) -- (T9);
    \draw[line width=.8pt] (T3) -- (T11);
    \draw[line width=.8pt] (T4) -- (T5);
    \draw[line width=.8pt] (T5) -- (T6);
    \draw[line width=.8pt] (T6) -- (T8);
    \draw[line width=.8pt] (T6) -- (T10);
    \draw[line width=.8pt] (T7) -- (T8);
    \draw[line width=.8pt] (T9) -- (T12);
    \draw[line width=.8pt] (T10) -- (T11);
    \end{tikzpicture}
    \qquad\qquad
      \begin{tikzpicture}[scale=.75, transform shape]
    \tikzstyle{vertex}=[circle,fill=black!30,minimum size=15pt,inner sep=0pt]
    \tikzstyle{vertexblack}=[circle,fill=black,minimum size=15pt,inner sep=0pt]
    \tikzstyle{vertexwhite}=[circle,fill=white,minimum size=15pt,draw=black,inner sep=0pt]

    \node[vertex,xshift=6cm,yshift=.5cm] (T12) at (-0.5,1) {$T_{9}$};
    \node[vertex,xshift=6cm,yshift=.5cm] (T9) at (.5,1) {$T_{12}$};
    
    \node[vertex,xshift=6cm,yshift=.5cm] (T7) at (-2,0) {$T_{5}$};
    \node[vertex,xshift=6cm,yshift=.5cm] (T8) at (-1,0) {$T_{6}$};
    \node[vertex,xshift=6cm,yshift=.5cm] (T6) at (0,0) {$T_{10}$};
    \node[vertex,xshift=6cm,yshift=.5cm] (T10) at (1,0) {$T_{8}$};
    \node[vertex,xshift=6cm,yshift=.5cm] (T11) at (2,0) {$T_{7}$};
    
     \node[vertex,xshift=6cm,yshift=.5cm] (T5) at (0,-1) {$T_{11}$};
    
    % Edges:
    \draw[line width=.8pt] (T5) -- (T6);
    \draw[line width=.8pt] (T6) -- (T8);
    \draw[line width=.8pt] (T6) -- (T10);
    \draw[line width=.8pt] (T7) -- (T8);
    \draw[line width=.8pt] (T9) -- (T12);
    \draw[line width=.8pt] (T10) -- (T11);
   
    \end{tikzpicture}
\end{center}
\item[\tt >] \tt{MDSintersno(X2); }
\out{ [-1, -1, -1, -1, -2, -2, -2, -2, -2, -2, -2, -2]}
\end{enumerate}
\endgroup

\noindent
The plot of the intersection graph are provided 
by \texttt{MDSpackage} either as \texttt{Maple}-objects
or in \LaTeX-format.
\end{example}

\begin{example}
We consider a variety with torus action of 
complexity one. 
The Cox ring $R$ of any such variety $Y$ can be 
simply encoded by a pair $(A,P)$ of matrices, 
see e.g.~\cite{ArDeHaLa} for details.
Here is how to enter it in~\texttt{MDSpackage}:
\\

\begingroup
\footnotesize
\begin{enumerate}[leftmargin=3em]
\item[\tt >] \tt{P := cols2matrix([
[-2,-2,-1,-1], [1,0,0,0], [1,0,1,0], [0,1,0,1], [0,1,0,0]
])}
\out{P := \left[
\begin{array}{rrrrr}
-2 & 1 & 1 & 0 & 0\\
-2 & 0 & 0 & 1 & 1\\
-1 & 0 & 1 & 0 & 0\\
-1 & 0 & 0 & 1 & 0
\end{array}
\right]}
\item[\tt >] \tt{A := [[1,0],[0,1],[-1,-1]];}
\out{A := [[1,0],[0,1],[-1,-1]]}
\item[\tt >] \tt{R := createGR(P, A);}
\out{R := GR(5, 1, [1, []])}
\item[\tt >] \tt{GRdata(R);}
\out{
\Bigl[[T[4] T[5] + T[2] T[3] + T[1]^2 ], [T[1], T[2], T[3], T[4], T[5]],
\\
\bigl[AGH([5, []], [1, []]), \begin{bmatrix}
                        1 & 1 & 1 & 1 & 1
                        \end{bmatrix}\bigr],
                        \ldots
    \bigr]
}
\end{enumerate}
\endgroup

\noindent
So far we defined the prospective Cox ring $R$.
To specify a variety $Y$ with Cox ring $R$,
we have to fix a bunch $\Phi$.
However, the unit component ${\rm Aut}(Y)^0$ of the 
(linear algebraic) automorphism group only depends
on the Cox ring
and we can compute its roots as follows:\\

\begingroup
\footnotesize
\begin{enumerate}[leftmargin=3em]
\item[\tt >] \tt{MDSautroots(R);}
\out{\{[1, -1], [1, 1], [-1, -1], [-1, 1], [0, -1], [0, 1], [1, 0], [-1, 0]\}}
\end{enumerate}
\endgroup

\noindent
We have obtained the root 
system~$B_2$ of the corresponding orthogonal group,
as expected for the smooth quadric
$Y$ in~$\PP_5$.
\begin{center}
\tiny
\begin{tikzpicture}[scale=.45]
\fill[color=black!25] (-1,-1) -- (1,-1) -- (1,1) -- (-1,1) -- cycle;
\draw[->] (-1.5,0) -- (1.5,0);
\draw[->] (0,-1.5) -- (0,1.5);

\foreach \x/\y in { -1/-1, 1/-1, 1/1, -1/1, -1/0, 1/0, 0/1, 0/-1 }{
  \fill (\x,\y) circle(2.5pt);
  \draw[thick] (0,0) -- (\x,\y);
}
\end{tikzpicture}
\end{center}
\end{example}

\begin{remark}
As a support of \texttt{MDSpackage} we build up the 
\emph{Cox ring database}~\cite{coxringdb}.
This database stores known Cox rings of varieties. 
Among others, it provides at the moment 
the Cox rings of all non-toric
\begin{itemize}
\item
smooth rational surfaces up to Picard number five, 
\item
smooth rational surfaces of Picard number six 
admitting only trivial $\KK^*$-actions, 
\item
Gorenstein del Pezzo 
surfaces of Picard number one, 
\item
cubic surfaces with at most rational 
double points,
\item
Gorenstein del Pezzo $\KK^{*}$-surfaces,
\item
smooth Fano threefolds of Picard number 
at most two,
\item
terminal Fano threefolds of Picard number one 
with a two-torus action.
\end{itemize}
We use the representation in terms of bunched rings 
explained in Section~\ref{sec:working}:
for a given Mori dream space $X$, the database stores 
the tuple $(r,\{g_1,\ldots,g_s\},K,Q,\Phi)$ where
the divisor class group and the Cox ring of $X$ are 
given by
$$ 
\Cl(X) \ = \ K, 
\qquad\qquad
\mathcal{R}(X) \ = \ \KT{r}/\<g_1,\ldots,g_s\>,
$$
the entry $Q$ is the degre matrix and $\Phi$ is
the bunch of cones fixing $X$ --- in case of a 
projective $X$ we just replace $\Phi$ with an 
ample class $w \in K$.
Here are some basic features of the database:
\begin{itemize}
\item
If available, further information is stored
such as basic geometric properties (dimension, 
Picard number, type of singularities, etc.), 
reference to author and original literature,
\item 
The web interface~\cite{coxringdb} allows searching 
the Cox ring database according to key words, 
authors and geometric properties,
\item 
The stored Cox rings and varieties can be directly 
exported in the necessary syntax for \texttt{MDSpackage} 
or \LaTeX.
\end{itemize}
For instance, the $40$ Fano threefolds 
of~\cite{BeHaHuNu} are found by entering 
\texttt{Fano AND terminal AND 
Picard number one AND dimension three AND Q-factorial} 
in the search field \texttt{description}.
Searching the \texttt{id} field for $96$, $97$, $6$ and~$98$
yields the Cox rings discussed in the introduction and the 
previous three examples.
\end{remark}

\section{Application: Resolutions of a quotient singularity}
\label{sec:app}

We use \texttt{MDSpackage} to study a symplectic quotient 
singularity discussed by Bellamy/Schedler~\cite{BellamySchedler} 
and by Donten-Bury/Wi\'{s}niewski~\cite{JarekMaria}. 
More precisely, we consider the subgroup 
$G \subseteq \mathrm{Gl}_4(\mathbb{C})$
generated by the matrices 
$$ 
\left[
\mbox{\footnotesize $
\begin{array}{rrrr}
1 & 0 & 0 & 0
\\
0 & -1 & 0 & 0
\\
0 & 0 & 1 & 0
\\
0 & 0 & 0 & -1
\end{array}
$
}
\right],
\qquad
\left[
\mbox{\footnotesize $
\begin{array}{rrrr}
0 & \I & 0 & 0
\\
-\I & 0 & 0 & 0
\\
0 & 0 & 0 & -\I
\\
0 & 0 & \I & 0
\end{array}$
}
\right],
\qquad
\left[
\mbox{\footnotesize $
\begin{array}{rrrr}
0 & 1 & 0 & 0
\\
1 & 0 & 0 & 0
\\
0 & 0 & 0 & 1
\\
0 & 0 & 1 & 0
\end{array}$
}
\right],
$$
$$
\left[
\mbox{\footnotesize $
\begin{array}{rrrr}
0 & 0 & 0 & 1
\\
0 & 0 & -1 & 0
\\
0 & -1 & 0 & 0
\\
1 & 0 & 0 & 0
\end{array}$
}
\right],
\qquad
\left[
\mbox{\footnotesize $
\begin{array}{rrrr}
0 & 0 & 0 & \I
\\
0 & 0 & -\I & 0
\\
0 & \I & 0 & 0
\\
-\I & 0 & 0 & 0
\end{array}$
}
\right].
$$ 

\noindent
The aim is to study resolutions of singularities 
of the quotient space $\mathbb{C}^4/G$.
In Theorem~\ref{thm:81resol}, we will determine 
for one of them the Cox ring and show that all 
of its flops are as well resolutions of singularities. 
This continues~\cite{BellamySchedler,JarekMaria}
and also retrieves the 81 symplectic 
resolutions of $\CC^4/G$ discussed there.

Observe that the commutator subgroup $[G,G] \subseteq G$ 
is generated by the negative unit matrix $-E_4$ and hence is
of order two.
Moreover, the abelianization $G' := G/[G,G]$ is isomorphic to 
$(\mathbb{Z}/2\mathbb{Z})^4$ and generated by the classes 
of the last four matrices.
A first step is to determine the Cox ring of the quotient 
space $\mathbb{C}^4/G$.

\begin{proposition}\label{prop1}
The quotient $\mathbb{C}^4/[G,G] \to \mathbb{C}^4/G$ 
by the induced action of $G'$ is the characteristic 
space over $\mathbb{C}^4/G$. 
The Cox ring $\mathcal{R}(\mathbb{C}^4/G) \cong 
\mathbb{C}[T_1,\ldots,T_{10}]^{[G,G]}$ is isomorphic
to $\mathbb{C}[T_1,\ldots,T_{10}] / I$, where 
the ideal $I$ is generated by
\begingroup
\footnotesize
\begin{gather*}
T_{5}T_{6}+T_{8}T_{9}-T_{7}T_{10}, \quad
T_{4}T_{6}+T_{2}T_{7}+T_{1}T_{8}, \quad
T_{3}T_{6}-T_{2}T_{9}-T_{1}T_{10},
\\
T_{2}T_{6}+T_{4}T_{7}+T_{3}T_{9}, \quad
T_{1}T_{6}-T_{4}T_{8}-T_{3}T_{10}, \quad
T_{5}^2-T_{7}^2+T_{9}^2+T_{2}^2,
\\
T_{4}T_{5}+T_{1}T_{9}+T_{2}T_{10}, \quad
T_{3}T_{5}-T_{1}T_{7}-T_{2}T_{8}, \quad
T_{2}T_{5}+T_{3}T_{8}+T_{4}T_{10},
\\
T_{1}T_{5}-T_{3}T_{7}-T_{4}T_{9}, \quad
T_{4}^2-T_{6}^2-T_{9}^2+T_{10}^2, \quad
T_{3}T_{4}+T_{7}T_{9}-T_{8}T_{10},
\\
T_{2}T_{4}+T_{6}T_{7}-T_{5}T_{10}, \quad
T_{1}T_{4}-T_{6}T_{8}+T_{5}T_{9}, \quad
T_{3}^2+T_{6}^2-T_{7}^2+T_{8}^2,
\\
T_{2}T_{3}+T_{5}T_{8}-T_{6}T_{9}, \quad
T_{1}T_{3}-T_{5}T_{7}+T_{6}T_{10},\quad 
T_{2}^2-T_{6}^2-T_{8}^2+T_{10}^2,
\\
T_{1}T_{2}+T_{7}T_{8}-T_{9}T_{10}, \qquad
T_{1}^2+T_{6}^2-T_{7}^2+T_{9}^2
\end{gather*}
\endgroup

\noindent
and the grading of $\mathbb{C}[T_1,\ldots,T_{10}] / I$ by 
$\mathrm{Cl}(\mathbb{C}^4/G) \cong \mathbb{X}(G') 
\cong (\mathbb{Z}/2\mathbb{Z})^4$
is given by assigning to the class of $T_i$ the 
$i$-th column of the following matrix over 
$\mathbb{Z}/2\mathbb{Z}$:
$$ 
Q
\ := \ 
\left[
\mbox{ \footnotesize $
\begin{array}{rrrrrrrrrr}
0 & 0 & 0 & 1 & 1 & 0 & 1 & 1 & 0 & 0
\\
0 & 0 & 0 & 1 & 0 & 1 & 0 & 0 & 1 & 1
\\
1 & 0 & 1 & 0 & 0 & 0 & 0 & 1 & 1 & 0
\\
0 & 1 & 1 & 0 & 0 & 0 & 1 & 0 & 0 & 1
\end{array}$
}
\right].
$$
The variables $T_i$ define pairwise non-associated 
$\mathbb{X}(G')$-prime elements in
$\mathcal{R}(\mathbb{C}^4/G)$.
The above presentation of the Cox ring gives rise 
to a commutative diagram
$$ 
\xymatrix{
{\mathbb{C}^4/[G,G]}
\ar[rr]
\ar[d]_{/G'}
&&
{\mathbb{C}^{10}}
\ar[d]^{/G'}
\\
{\mathbb{C}^4/G}
\ar[rr]
&&
Z
}
$$
where the horizontal arrows are closed embeddings. 
The ambient affine toric variety $Z = \mathbb{C}^{10}/G'$
of $\mathbb{C}^4/G$ is of dimension ten and arises 
from the cone $\sigma \subseteq \mathbb{Q}^{10}$ 
generated by the vectors
\begingroup \small
\begin{gather*} 
v_1 \,:=\, e_1, \quad 
v_2 \,:=\, e_2, \quad 
v_3 \,:=\, e_3, \quad 
v_4 \,:=\, e_4, \quad 
v_5 \,:=\, e_5, 
\\
v_6 \,:=\, e_1+e_2+e_5+2e_6, \quad
v_7 \,:=\, e_7 \quad
v_8 \,:=\, e_4+e_5+e_7+2e_8, \quad
\\
v_9 \,:=\, e_1+e_3+e_4+e_5+e_7+2e_8, \quad 
v_{10} \,:=\, e_1+e_2+e_7+2e_{10}. 
\end{gather*}
 \endgroup

\noindent
 There are precisely five toric orbits of dimension six in $Z$ that contain 
singularities of $\mathbb{C}^4/G$.
Any further singularities of $\mathbb{C}^4/G$ are contained in the 
closures of these orbits.
The faces of $\sigma$ corresponding to these five toric orbits are:
$$
\mathrm{cone}(v_1,v_2,v_5,v_6),
\quad
\mathrm{cone}(v_4,v_5,v_7,v_8),
\quad
\mathrm{cone}(v_1,v_3,v_8,v_9),
$$
$$
\mathrm{cone}(v_4,v_6,v_9,v_{10}),
\qquad
\mathrm{cone}(v_2,v_3,v_7,v_{10}).
$$
\end{proposition}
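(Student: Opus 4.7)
The commutator subgroup $[G,G]$ is generated by $-E_4$ (as stated just above the proposition) and hence has order two; consequently $\CC^4/[G,G]$ is the affine cone over the second Veronese embedding of $\PP^3$, with coordinate ring the algebra of even-degree polynomials in $\CC[x_1,\ldots,x_4]$. This algebra is minimally generated by the ten quadratic monomials $x_ix_j$, subject to the standard quadratic relations coming from identities such as $(x_ix_j)(x_kx_\ell) = (x_ix_k)(x_jx_\ell)$.

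The first two assertions of the proposition follow from the general construction of Cox rings of quotients by finite groups, see \cite[Sect.~4.2]{ArDeHaLa}: for a finite subgroup $H \subseteq \GL_n(\CC)$ acting freely in codimension one, the map $\CC^n/[H,H] \to \CC^n/H$ is a characteristic space over $\CC^n/H$ and one has
\[
\mathcal{R}(\CC^n/H) \ \cong \ \CC[x_1,\ldots,x_n]^{[H,H]},
\]
graded by $\mathbb{X}(H/[H,H])$. Specialising to $H = G$ gives the claimed characteristic space, the isomorphism $\mathcal{R}(\CC^4/G) \cong \CC[x_1,\ldots,x_4]^{[G,G]}$, and identifies the grading group as $\mathbb{X}(G') \cong (\ZZ/2\ZZ)^4$. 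To obtain the concrete presentation with the stated degree matrix $Q$, choose the $T_i$ so as to diagonalise the induced $G'$-action on the degree-two part of $\CC[x_1,\ldots,x_4]$ — this is possible since $G' \cong (\ZZ/2\ZZ)^4$ is abelian of exponent two. The columns of $Q$ are then the $G'$-characters of the chosen eigenvectors, and the generators of $I$ are the standard quadratic relations among degree-two monomials rewritten in this basis; \texttt{MDSpackage} performs this linear-algebra and Gr\"obner computation. Pairwise non-associatedness and $\mathbb{X}(G')$-primeness of the $T_i$ then follow because the columns of $Q$ are pairwise distinct characters and each $T_i$ cuts out an irreducible quadric in $\CC^4/[G,G]$.

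Finally, the ambient toric variety $Z$ is determined by the $\mathbb{X}(G')$-grading via the standard bunched-ring construction of \cite{BeHa}: the lattice of one-parameter subgroups of the torus of $Z$ is dual to $\ker(Q) \subseteq \ZZ^{10}$, and the rays $v_1,\ldots,v_{10}$ are the images of the standard basis vectors of $\ZZ^{10}$ in this dual lattice; they span the defining cone $\sigma$. To single out the five six-dimensional toric orbits of $Z$ containing singular points of $\CC^4/G$, one enumerates the four-dimensional faces of $\sigma$ and, for each, applies the Jacobian criterion to $I$ to decide whether the intersection of the corresponding toric stratum with $\CC^4/G$ is singular. The combinatorial enumeration coupled with Jacobian calculations on a system of twenty quadrics in ten variables is the main obstacle of the proof, and this is precisely the kind of bookkeeping for which \texttt{MDSpackage} is designed.
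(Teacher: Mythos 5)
Most of your plan runs parallel to the paper's proof: the characteristic‑space statement and the identification $\mathcal{R}(\CC^4/G)\cong\CC[S_1,\ldots,S_4]^{[G,G]}$ are quoted from a general result on finite quotients (the paper cites Arzhantsev--Gaifullin \cite{AG10}; your reference to \cite{ArDeHaLa} is equivalent, and since $G$ is symplectic it indeed contains no quasi-reflections), the ten generators are obtained by diagonalising the $G'$-action on the quadrics --- the paper writes this eigenbasis down explicitly and reads off the columns of $Q$ as the corresponding characters --- and $I$ is computed as the ideal of relations among these generators. Your arguments for non-associatedness (pairwise distinct degrees, units being constants) and for $\mathbb{X}(G')$-primeness (the ten quadrics are of rank four, hence irreducible) are acceptable, and the description of $Z=\CC^{10}/G'$ and of $\sigma$ via the kernel of $Q$ is exactly what the paper invokes from \cite[Section~3.2.5]{ArDeHaLa}.

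There is, however, a genuine gap in your treatment of the last claims. You propose to enumerate the four-dimensional faces of $\sigma$ and to decide singularity of $\CC^4/G$ along the corresponding six-dimensional orbits by ``applying the Jacobian criterion to $I$''. The Jacobian criterion on $I$ detects the singular locus of the total coordinate space $V(I)=\CC^4/[G,G]\subseteq\CC^{10}$; since $[G,G]=\{\pm E_4\}$ acts freely on $\CC^4\setminus\{0\}$, that singular locus is only the image of the origin, which sits in the deepest stratum of $Z$. So your test would return ``smooth'' on every positive-dimensional stratum and would locate none of the five orbits --- contradicting the statement you are trying to prove. The singularities of $\CC^4/G$ along those orbits do not come from singularities of $V(I)$; they are quotient singularities caused by nontrivial $G'$-isotropy, equivalently by the singularities of the ambient toric variety $Z$ along the strata meeting the embedded $\CC^4/G$. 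This is precisely why the paper settles these claims by the stratification and smoothness criteria of \cite[Sections~3.2.5 and~3.3.1]{ArDeHaLa} (compare Cor.~3.3.1.12, used again later in the paper): for each face of $\sigma$ one must check whether the associated orbit meets $\CC^4/G$ at all (a condition on $I$ and the pattern of vanishing coordinates) and then combine quasi-smoothness of $V(I)$ there with the local structure of $Z$ (isotropy, local class groups) to decide smoothness of $\CC^4/G$. Your plan needs this ingredient; the Jacobian computation on $I$ alone cannot replace it.
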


\begin{proof}
The first sentence is a direct consequence of 
a more general statement due to Arzhantsev and 
Ga$\breve{\text{\i}}$fullin~\cite{AG10}.
We verify the claimed presentation of the Cox 
ring 
$$
\mathcal{R}(\mathbb{C}^4/G) 
\ \cong \ 
\mathbb{C}[S_1,\ldots,S_4]^{[G,G]}.
$$
The algebra of $[G,G]$-invariant polynomials 
is generated by the quadratic monomials 
of $\mathbb{C}[S_1,\ldots,S_4]$.
Moreover, $G'$ acts on the ten-dimensional 
$\mathbb{C}$-vector subspace generated by 
these monomials and the following 
$G'$-homoge\-neous polynomials form a basis 
of this vector subspace: 
\begingroup \small
\begin{gather*}
S_1S_2 + S_3S_4, \quad
S_1S_2 - S_3S_4, \quad 
S_1S_3 + S_2S_4, 
\\
S_1S_3 - S_2S_4, \quad
S_1S_4 + S_2S_3, \quad 
S_1S_4 - S_2S_3,
\\
\frac{1}{2}\left(S_1^2+ S_2^2 + S_3^2 + S_4^2\right), \qquad  
\frac{1}{2}\left(-S_1^2 - S_2^2 + S_3^2 + S_4^2\right),
\\
\frac{1}{2}\left(-S_1^2 + S_2^2 - S_3^2 + S_4^2\right), \qquad
\frac{1}{2}\left(S_1^2  - S_2^2 - S_3^2 + S_4^2\right).
\end{gather*}
\endgroup
According to the order of listing, the weights 
of these polynomials occur as columns of the 
matrix~$Q$ of the assertion.
Thus, we found ten $\mathbb{X}(G')$-homogeneous 
prime generators for  
$\mathbb{C}[S_1,\ldots,S_4]^{[G,G]}$.

The desired presentation of the Cox ring is now 
obtained by sending $T_i$ to the $i$-th generator 
and the ideal $I$ is computed as the ideal of 
algebraic relations between the above generators.
The remaining claims are direct applications 
of~\cite[Sections~3.2.5 and~3.3.1]{ArDeHaLa}.
\end{proof}

The second step is to compute the Cox ring $\mathcal{R}(X)$
for a certain resolution $X \to \mathbb{C}^4/G$ 
of singularities.
This is done using the technique of toric ambient 
modifications introduced in~\cite{Ha2}: 
we just perform an evident partial toric resolution 
of the ambient affine toric variety $Z$ 
and show that the proper transform $X$ of $\mathbb{C}^4/G$ 
is as wanted. 
The 81 symplectic resolutions then correspond to 
the full dimensional chambers of the Mori chamber 
decomposition of the cone of movable divisor classes 
of~$X$.

\begin{theorem}
\label{thm:81resol}
Consider the polynomial ring 
$\mathbb{C}[T_1,\ldots,T_{15}]$
with the $\mathbb{Z}^5$-grading assigning 
to the variables the columns of the following 
matrix
$$
\left[
\mbox{\footnotesize $
\begin{array}{rrrrrrrrrrrrrrr}
1 & 1 & 0 & 0 & 1 & 1 & 0 & 0 & 0 & 0 & -2 & 0 & 0 & 0 & 0
\\
0 & 0 & 0 & 1 & 1 & 0 & 1 & 1 & 0 & 0 & 0 & -2 & 0 & 0 & 0
\\
1 & 0 & 0 & 0 & 0 & 1 & -1 & 0 & 1 & 0 & -1 & 1 & -1 & -1 & 1
\\
1 & 1 & 1 & -1 & 0 & 0 & 0 & 0 & 0 & 0 & -1 & 1 & -1 & 1 & -1
\\
0 & 1 & 0 & 0 & 0 & 1 & 0 & -1 & 0 & 1 & -1 & 1 & 1 & -1 & -1
\end{array}$}
\right]
$$
and the ideal 
$I \subseteq \mathbb{C}[T_1,\ldots,T_{15}]$
generated by the following $\mathbb{Z}^5$-homogeneous
polynomials
\begingroup
\allowdisplaybreaks
\footnotesize
\begin{gather*}
T_{3}T_{6}-T_{2}T_{9}-T_{1}T_{10}, \quad 
T_{3}T_{5}-T_{1}T_{7}-T_{2}T_{8}, \quad 
T_{3}T_{4}+T_{7}T_{9}-T_{8}T_{10}, 
\\
T_{2}T_{4}+T_{6}T_{7}-T_{5}T_{10}, \qquad 
T_{1}T_{4}-T_{6}T_{8}+T_{5}T_{9}, 
\\
T_{1}T_{8}T_{13}+T_{4}T_{6}T_{14}+T_{2}T_{7}T_{15}, \quad 
T_{5}T_{8}T_{12}-T_{6}T_{9}T_{14}+T_{2}T_{3}T_{15}, 
\\
T_{5}T_{7}T_{12}-T_{1}T_{3}T_{13}-T_{6}T_{10}T_{14}, \quad 
T_{5}^2T_{12}-T_{1}^2T_{13}-T_{6}^2T_{14}+T_{2}^2T_{15}, \quad 
\\
T_{4}T_{5}T_{12}+T_{1}T_{9}T_{13}+T_{2}T_{10}T_{15}, \quad 
T_{6}^2T_{11}-T_{4}^2T_{12}+T_{9}^2T_{13}-T_{10}^2T_{15}, 
\\
T_{5}T_{6}T_{11}+T_{8}T_{9}T_{13}-T_{7}T_{10}T_{15}, \quad
T_{2}T_{6}T_{11}+T_{4}T_{7}T_{12}+T_{3}T_{9}T_{13}, 
\\
T_{1}T_{6}T_{11}-T_{4}T_{8}T_{12}-T_{3}T_{10}T_{15}, \quad 
T_{5}^2T_{11}+T_{8}^2T_{13}+T_{4}^2T_{14}-T_{7}^2T_{15}, 
\\
T_{2}T_{5}T_{11}+T_{3}T_{8}T_{13}+T_{4}T_{10}T_{14}, \quad 
T_{1}T_{5}T_{11}-T_{4}T_{9}T_{14}-T_{3}T_{7}T_{15}, 
\\
T_{2}^2T_{11}-T_{7}^2T_{12}+T_{3}^2T_{13}+T_{10}^2T_{14}, \quad 
T_{1}T_{2}T_{11}+T_{7}T_{8}T_{12}-T_{9}T_{10}T_{14}, 
\\
T_{1}^2T_{11}-T_{8}^2T_{12}+T_{9}^2T_{14}-T_{3}^2T_{15}.
\end{gather*}
\endgroup
Then the ideal $I$ is homogeneous w.r.t.~the 
$\mathbb{Z}^5$-grading and thus, the factor ring 
$R := \mathbb{C}[T_1,\ldots,T_{15}] / I$
inherits a $\mathbb{Z}^5$-grading.
Moreover, the following statements hold.
\begin{enumerate}
\item
The ring $R$ is factorial, the variables 
$T_1,\ldots,T_{15}$ define pairwise non-associated 
prime elements in $R$ and any $14$ of the degrees 
of the $15$ variables generate $\mathbb{Z}^5$ as 
a group.
\item
The moving cone $\mathrm{Mov}(R)$ of the 
$\mathbb{Z}^5$-graded ring $R$ is of dimension five
and contains precisely 81 five-dimensional 
GIT-chambers.
Each of these chambers defines a smooth variety 
$X_i$ having divisor class group $\mathrm{Cl}(X_i)
\cong \mathbb{Z}^5$ and Cox ring 
$\mathcal{R}(X_i) = R$.
\item
The subring $R_0 \subseteq R$ of elements of degree
zero is isomorphic to the coordinate ring of 
$\CC^4/G$ and thus we have canonical morphisms
$X_i \to \CC^4/G$, each of which is a symplectic
resolution of singularities.
\end{enumerate}
\end{theorem}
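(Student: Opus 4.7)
The overall strategy is the toric ambient modification technique of \cite{Ha2}, starting from the embedding $\mathbb{C}^4/G\hookrightarrow Z$ provided by Proposition~\ref{prop1}. The five distinguished four-dimensional faces of the cone $\sigma$ listed at the end of that proposition support all singularities of $\mathbb{C}^4/G$; the plan is to subdivide the fan of $Z$ by inserting one new primitive ray $v_{11},\ldots,v_{15}$ in the relative interior of each such face. The refined fan defines a partial toric resolution $\tilde Z \to Z$, and five new torus-invariant prime divisors appear, corresponding to five new Cox variables $T_{11},\ldots,T_{15}$. I take $X$ to be the proper transform of $\mathbb{C}^4/G$ in $\tilde Z$.

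By the homogenization formalism of \cite{Ha2} (see also~\cite{ArDeHaLa}), the Cox ring of $X$ is obtained from $\mathcal{R}(\mathbb{C}^4/G)$ by replacing each of the $20$ generating relations from Proposition~\ref{prop1} by its homogenization with respect to the new variables: one appends the monomial $T_{11}^{a_{11}}\cdots T_{15}^{a_{15}}$ with exponents dictated by the orders of vanishing of the given relation along the five new divisors, which in turn are read off combinatorially from the positions of $v_{11},\ldots,v_{15}$ inside their respective faces. The class group grows by $\mathbb{Z}^5$ (one summand per inserted ray) to yield $\mathrm{Cl}(X)\cong\mathbb{Z}^5$, and the degree matrix is forced once one fixes an isomorphism. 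Matching the resulting $15$ variables, $20$ relations and $5\times 15$ degree matrix against the statement verifies the presentation; homogeneity of $I$ is automatic.

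For part~(i), primality of each $T_i$ follows from its interpretation as the canonical section of a prime divisor of~$X$ (the strict transform of $V(T_i)\subseteq Z$ for $i\le 10$, or one of the five new exceptional divisors for $i\ge 11$), pairwise non-associatedness from the distinctness of their degrees, and factoriality of $R$ via the unique-factorization criterion for Cox rings in \cite{ArDeHaLa}; the ``$14$-of-$15$'' generation claim reduces to a finite determinant check on the displayed degree matrix. Part~(ii) is then carried out in \texttt{MDSpackage}: feed $R$ into \texttt{MDSmov} and \texttt{MDSchambers} to compute $\mathrm{Mov}(R)$ and its Mori chamber decomposition, exhibiting the five-dimensionality and the count of $81$ full-dimensional chambers; for each chamber the associated GIT-quotient $X_i$ has divisor class group $\mathbb{Z}^5$ by construction, Cox ring $R$ by the universal property, and smoothness is verified by \texttt{MDSissmooth} (equivalently by the Jacobian criterion on the relevant faces, controlled by the generation condition from~(i)).

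For part~(iii), the degree-zero subring $R_0\subseteq R$ is by construction the ring of $G$-invariants in the original polynomial ring $\mathbb{C}[S_1,\ldots,S_4]$, hence coincides with $\mathbb{C}[\mathbb{C}^4/G]$; each GIT morphism $X_i\to\mathrm{Spec}(R_0)=\mathbb{C}^4/G$ is then projective and birational. To upgrade these to \emph{symplectic} resolutions one first checks directly that all five generators of~$G$ lie in $\mathrm{Sp}_4(\mathbb{C})$, making $\mathbb{C}^4/G$ a symplectic singularity; each $X_i$ being smooth, it is a symplectic resolution as soon as it is crepant, which is verified with \texttt{MDSpackage} by computing the anticanonical class of $X_i$ and confirming that every exceptional divisor appears with discrepancy zero. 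The main obstacle is the precise choice in the second paragraph: the primitive vectors $v_{11},\ldots,v_{15}$ must be selected so that the $20$ homogenized relations match the statement \emph{and} so that the resulting moving cone splits into exactly $81$ smooth, crepant GIT-chambers---an inadvertent choice could yield extra relations, a non-crepant model, or a chamber giving a singular variety.
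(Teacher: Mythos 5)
Your overall route is the paper's: embed $\mathbb{C}^4/G$ in the toric variety $Z$ of Proposition~\ref{prop1}, perform a partial toric resolution by inserting one ray over each of the five singular faces, compute the Cox ring of the proper transform by the toric ambient modification algorithm of~\cite{HaKeLa}, then compute the GIT-fan, select the $81$ full-dimensional chambers inside the moving cone and check smoothness of each $X_i$ computationally. However, there is a genuine gap, which you yourself flag as ``the main obstacle'': you never specify the five inserted rays. The paper does: each of the five singular cones is resolved by a single \emph{barycentric} subdivision, i.e.\ the inserted primitive vectors are the (halved) sums of the four generators of each face, explicitly
$(1,1,0,0,1,1,0,0,0,0)$, $(0,0,0,1,1,0,1,1,0,0)$, $(1,0,1,1,1,0,1,1,1,0)$, $(1,1,1,1,1,1,1,0,1,1)$, $(0,1,1,0,0,0,1,0,0,1)$.
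This datum determines the degrees of $T_{11},\ldots,T_{15}$, the homogenized relations, and ultimately the chamber count and smoothness results; without it the construction cannot be matched to the displayed degree matrix, so the proof is incomplete at its central point rather than merely ``to be checked against the statement.''

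Two further points. First, your argument for part~(i) is circular relative to the method: primality of the $T_i$ cannot be deduced from their being canonical sections of prime divisors of $X$, because identifying $R$ as $\mathcal{R}(X)$ via~\cite[Algorithm~3.6]{HaKeLa} \emph{requires} verifying that the variables are prime in the (saturated) homogenized ring --- this is exactly the \texttt{'verify'} step of \texttt{MDSmodify} in the paper's proof, an input rather than a consequence; likewise ``the Cox ring is obtained by homogenizing the $20$ relations'' is an outcome of that verified computation, not an automatic fact. Relatedly, the class group does not simply ``grow by $\mathbb{Z}^5$'': $\Cl(\mathbb{C}^4/G)\cong(\mathbb{Z}/2\mathbb{Z})^4$ while $\Cl(X_i)\cong\mathbb{Z}^5$, so the torsion is absorbed in the new presentation, which again comes out of the algorithm. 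Second, for part~(iii) the paper contents itself with the observation that the claim is clear from the construction of $R$ as the common Cox ring of the $X_i$ (over $\Spec R_0=\mathbb{C}^4/G$); your more explicit plan (checking $G\subseteq\mathrm{Sp}_4(\mathbb{C})$ and crepancy) is reasonable in spirit, but note that the package's anticanonical/discrepancy machinery is stated only for complete intersection Cox rings, and $R$ here has $20$ relations among $15$ variables, so that particular verification tool does not apply as you describe.
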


\begin{proof}
We start with $X_0 := \CC^4/G$ and the presentation of its 
Cox ring $R_0 := \mathcal{R}(X_0)$
given in Proposition~\ref{prop1}.
We enter the relations of $R_0$ given there as a list \texttt{RL0} 
in \texttt{MDSpackage} and as well the grading matrix 
which we call now \texttt{Q0mat}.
Using these data, we then define the degree map $Q_0\colon \ZZ^{10} \to (\ZZ/2\ZZ)^4$, 
the Cox ring $R_0$ and $X_0$:\\

\begingroup
\footnotesize
\begin{enumerate}[leftmargin=3em]
\item[\tt >] \tt{Q0 := createAGH(createAG(10),createAG(0,[2,2,2,2]),Q0mat);}
\out{Q0 := AGH([10, []], [0, [2, 2, 2, 2]])}
\item[\tt >] \tt{R0 := createGR(RL0, vars(10), [Q0], 'Singular');}
\out{R0 := GR(10, 20, [0, [2, 2, 2, 2]])}
\item[\tt >] \tt{X0 := createMDS(R0,[0]);}
\out{X0 := MDS(10, 20, 4, [0, [2, 2, 2, 2]])}
\end{enumerate}
\endgroup

We resolve the singularities of $X_0$
via a suitable toric modification
of the ambient affine toric variety $Z$.
By Proposition~\ref{prop1}, there are five singular 
toric orbits, the closures of which form
the singular locus of $Z$.
The first evident step towards any toric resolution
is to resolve the corresponding cones. 
This is achieved in each of the five cases by a 
single barycentric subdivision, i.e., we insert 
five new rays $\QQ_{\geq 0}\cdot v_1,\ldots,\QQ_{\geq 0}\cdot v_5$
in total where the list of $v_i$ is\\

\begingroup
\footnotesize
\begin{enumerate}[leftmargin=3em]
\item[\tt >] \tt{VL := [[1,1,0,0,1,1,0,0,0,0], [0,0,0,1,1,0,1,1,0,0], [1,0,1,1,1,0,1,1,1,0],}\\
\tt{[1,1,1,1,1,1,1,0,1,1], [0,1,1,0,0,0,1,0,0,1]];}
\out{VL := [[1, 1, 0, 0, 1, 1, 0, 0, 0, 0], [0, 0, 0, 1, 1, 0, 1, 1, 0, 0], [1, 0, 1, 1, 1, 0, 1, 1, 1, 0],\\
[1, 1, 1, 1, 1, 1, 1, 0, 1, 1], [0, 1, 1, 0, 0, 0, 1, 0, 0, 1]]}
\end{enumerate}
\endgroup

\noindent
The Cox ring $R$ of the proper transform $X$ of $X_0$ under 
such a partial ambient resolution can be computed 
via  the command \texttt{MDSmodify} which 
implements~\cite[Algorithm~3.6]{HaKeLa}:\\

\begingroup
\footnotesize
\begin{enumerate}[leftmargin=3em]
\item[\tt >] \tt{X := MDSmodify(X0, VL, 'Singular', 'verify');}\\[1ex]
Verification successful: all variables are prime.\\
\out{X := MDS(15, 20, 4, [5, []])}
\item[\tt >] \tt{R := MDSdata(X)[1];}
\out{R := GR(15, 20, [5, []])}
\end{enumerate}
\endgroup

\noindent
The algorithm thus returns the $\mathbb{Z}^5$-graded ring \texttt{R}
as the Cox ring $R:=R_2$ of the assertion and, additionally, verifies the 
statements of the first assertion. 
For the second assertion, we compute the GIT-fan
using \texttt{MDSpackage}'s 
implementation of~\cite[Algorithm~8]{Ke}:\\

\begingroup
\footnotesize
\begin{enumerate}[leftmargin=3em]
\item[\tt >] \tt{F := GRgitfan(R, 'fan');}
\out{F := FAN(5, 0, [0, 0, 0, 0, 207])}
\item[\tt >] \tt{Mov := MDSmov(X);}
\out{Mov := CONE(5, 5, 0, 5, 5)}
\end{enumerate}
\endgroup

\noindent
The command \texttt{U := select(c -> contains(Mov,c), maximal(F))}
retrieves the 81 full-dimensional chambers 
inside the moving cone as claimed.
We test smoothness of the varieties $X_{i}$
associated to the chambers \texttt{U[i]}
using an \texttt{MDSpackage} function 
based on the criterion~\cite[Cor.~3.3.1.12]{ArDeHaLa};
compare~\cite{JarekMaria} for a related 
approach:\\

\begingroup
\footnotesize
\begin{enumerate}[leftmargin=3em]
\item[\tt >] \tt{w1 := relint(U[1])}
\out{w1 := [6, 1, 3, 3, 3]}
\item[\tt >] \tt{X1 := createMDS(R, w1);}
\out{X1 := MDS(15, 20, 4, [5, []])}
\item[\tt >] \tt{MDSissmooth(X1, 'magma'); }
\out{true}
\end{enumerate}
\endgroup

\noindent
All test results are positive. The computation
is sped up a bit by calling \texttt{magma} under
the surface; it takes some hours on a personal
computer.
Note that $X$ is among the $X_{i}$.
The third assertion is clear by the way we obtained 
the common Cox ring $R$ of the~$X_{i}$.
\end{proof}

\begin{remark}
Having proved Theorem~\ref{thm:81resol}, 
a discussion of the geometry of the resolutions 
is possible via the methods of~\texttt{MDSpackage}.
\end{remark}

\begin{acknowledgement}
We are grateful to Jaros\l aw Wi\'sniewski for 
informing us about the current state of research
concerning the 81 symplectic resolutions just discussed
and encouraging us to work out Theorem~\ref{thm:81resol}.
\end{acknowledgement}

\bibliographystyle{abbrv}

\begin{thebibliography}{}
%
\bibitem{ArLa}
M.~Artebani, A.~Laface:
\textit{Hypersurfaces in Mori dream spaces}.
J. Algebra 371 (2012), 26--37.
% 
\bibitem{ArDeHaLa}
I.~Arzhantsev, U.~Derenthal, J.~Hausen, A.~Laface:
\textit{Cox rings}. 
Cambridge Studies in Advanced Mathematics 144,
Cambridge University Press, Cambridge, 2015. 
%
\bibitem{AG10}
I.~Arzhantsev,  S.~Ga{\u\i}fullin:
\textit{Cox rings, semigroups, and automorphisms of affine varieties}.
Mat. Sb. (201), 2010, no.~1, 3--24.
%
\bibitem{BeHaHuNu}
B.~Bechtold, J.~Hausen, E.~Huggenberger, M.~Nicolussi:
\textit{On terminal Fano $3$-folds with $2$-torus action}.
Int. Math. Res. Not., \texttt{doi:10.1093/imrn/rnv190}.
%
\bibitem{BellamySchedler}
G.~Bellamy, T.~Schedler:
\textit{A new linear quotient of {${\bf C}^4$} admitting 
a symplectic resolution}.
Math. Z. (273) 2013, no.~3-4, 753--769.
% 
\bibitem{BeHa} 
F.~Berchtold, J.~Hausen:
\textit{Cox rings and combinatorics}. 
Trans. Amer. Math. Soc. 359 (2007), no. 3, 1205--1252.
% 
\bibitem{tordiv14}
F.~Berchtold, J.~Hausen, S.~Keicher, R.~Vollmert, M.~Widmann:
\textit{TorDiv 1.4 --- a {M}aple {P}ackage on {T}oric {G}eometry and 
{G}eometric {I}nvariant {T}heory}.
Available at \url{http://www.mathematik.uni-tuebingen.de/~hausen/}
%
\bibitem{BCHM}
C.~Birkar, P.~Cascini, C.~Hacon, J.~McKernan:
\textit{Existence of minimal models for varieties of log general type}.
J. Amer. Math. Soc. 23 (2010), 405--468.
%
\bibitem{magma}
W.~Bosma, J.~Cannon, C.~Playoust:
\textit{The {M}agma algebra system. {I}. {T}he user language}.
J. Symbolic Comput. 24 (1997), no. 3-4, 235--265.
%
\bibitem{gradedringdb}
G.~Brown:
\textit{A Database of Polarized K3 Surfaces}.
Exp. Math. 16:1 (2007), 7--20. 
%
\bibitem{Der}
U.~Derenthal:
\textit{Singular Del Pezzo surfaces whose universal torsors are hypersurfaces}.
Proc. Lond. Math. Soc. 108 (2014), no. 3, pp. 638-681.
%
\bibitem{JarekMaria}
M.~Donten-Bury, J.~Wi{\'s}niewski:
\textit{On 81 symplectic resolutions of a 4-dimensional 
quotient by a group of order 32}.
Preprint, \texttt{arXiv:1409.4204v3}.
%
\bibitem{convex}
M.~Franz:
\textit{Convex -- a Maple package for convex geometry}.
Available at \url{http://www.math.uwo.ca/~mfranz/convex/}
and the Maple homepage 
\url{http://www.maplesoft.com/applications/view.aspx?SID=4507}
%
\bibitem{Gu}
S.~Gutsche:
\textit{{GAP} package {T}oric{V}arieties}.
Available at 
\url{http://www.gap-system.org/Packages/toricvarieties.html}
%
\bibitem{Ha2}
J.~Hausen:
\textit{Cox rings and combinatorics II}. 
Mosc. Math. J. 8 (2008), no. 4, 711--757, 847.
%
\bibitem{coxringdb}
J.~Hausen, S.~Keicher:
\textit{Cox ring database}.
Available at 
\url{http://www.math.uni-tuebingen.de/user/keicher/coxringdb/}
%
\bibitem{mdspackage}
J.~Hausen, S.~Keicher:
\textit{MDSpackage -- a package for Mori dream spaces}.
Available at 
\url{http://www.math.uni-tuebingen.de/user/keicher/MDS/}
%
\bibitem{HaKeLa}
J.~Hausen, S.~Keicher, A.~Laface:
\textit{Computing {C}ox rings}.
Math. Comp., to appear, see also \texttt{arXiv:1305.4343}.
%
\bibitem{HuKe}
Y.~Hu, S.~Keel:
\textit{Mori dream spaces and GIT}.
Dedicated to William Fulton on the occasion of his 60th birthday.
Michigan Math. J. 48 (2000), 331--348. 
%
\bibitem{IaFl}
A.~R.~Iano-Fletcher:
{\em Working with weighted complete intersections}.
 Explicit birational geometry of 3-folds,
London Math. Soc. Lecture Note Ser., 281,
 101--173, Cambridge Univ. Press, 2000.
%
\bibitem{Jow}
S.-Y.~Jow:
\textit{A Lefschetz hyperplane theorem for Mori dream spaces}.
Math. Z. 268 (2011), no.~1-2, 197--209.
%
\bibitem{Joytoric}
D.~Joyner:
\textit{{GAP} package toric}.
Available at \url{http://www.gap-system.org/Packages/toric.html}.
%
\bibitem{KLPW}
L.~Kastner, B.~Lorenz, A.~Paffenholz, A.-L.~Winz:
\textit{polymake\_toric}.
Available at \url{https://github.com/lkastner/polymake_toric}
%
\bibitem{Ke0}
S.~Keicher: 
\textit{Computing the GIT-fan}.
Internat. J. Algebra Comput. 22 (2012), no. 7, 1250064, 11 pp.
%
\bibitem{Ke}
S.~Keicher: 
\textit{Algorithms for Mori Dream Spaces}.
PhD thesis, Universit\"at T\"ubingen, 2014,
\url{http://nbn-resolving.de/urn:nbn:de:bsz:21-dspace-540614}.
%
\bibitem{maple}
\textit{Maple 10}.
Maplesoft, a division of Waterloo Maple Inc., Waterloo, Ontario.
See \url{http://www.maplesoft.com/}
%
\bibitem{McK}
J.~McKernan:
\textit{Mori dream spaces}.
Jpn. J. Math. 5 (2010), no.~1, 127--151.
% 
\bibitem{VeJo}
H.~Verrill, D.~Joyner:
\textit{Computing with toric varieties}.
J. Symbolic Comput. 42 (2007), 5, 511--532.
%
\end{thebibliography}

\end{document}